\documentclass[12pt]{amsart}
\usepackage{amsmath,amsthm,amssymb}
\usepackage{mathrsfs}
\usepackage{xcolor}
\font\teneufm=eufm10
\font\seveneufm=eufm7
\font\fiveeufm=eufm5
\newfam\frakturfam
\textfont\frakturfam=\teneufm
\scriptfont\frakturfam=\seveneufm
\scriptscriptfont\frakturfam=\fiveeufm

\theoremstyle{definition}
\newtheorem{definition}{Definition}[section]
\newtheorem{remark}[definition]{Remark}
\newtheorem{example}[definition]{Example}

\theoremstyle{plain}
\newtheorem{lemma}[definition]{Lemma}
\newtheorem{prp}[definition]{Proposition}
\newtheorem{corollary}[definition]{Corollary}
\newtheorem{theorem}[definition]{Theorem}

\newcommand{\Zinb}{\operatorname{Zinb}}
\newcommand{\BinZinb}{\operatorname{BinZinb}}
\newcommand{\spn}{\operatorname{span}}
\numberwithin{equation}{section}

\begin{document}

\title[Binary Zinbiel algebras]{Nilpotency and nil subvarieties of binary Zinbiel algebras}

\author{Nurken Smadyarov}

\address{Institute of Mathematics and Mathematical Modeling, Almaty, Kazakhstan and Farabi Atyrau International School, Atyrau, Kazakhstan}

\email{nurken.smadyar@gmail.com}

\date{}
\subjclass[2020]{17A30, 17A50}
\keywords{Binary Zinbiel algebra, Zinbiel defect, free algebra, solvability,
nilpotency, nil variety}

\begin{abstract}
An algebra is called binary Zinbiel if every subalgebra generated by
at most two elements is a Zinbiel algebra. Over a field of
characteristic zero, we prove that every finite-dimensional binary Zinbiel algebra is nilpotent. We also show that a subvariety of binary Zinbiel algebras
is nil if and only if it does not contain the variety of all Zinbiel
algebras. We further prove an analogue of It\^o's theorem for binary Zinbiel algebras.
\end{abstract}

\maketitle
\markboth{BINARY ZINBIEL ALGEBRAS}{BINARY ZINBIEL ALGEBRAS}

\section{Introduction}\label{sec:introduction}

Let $\mathbb{F}$ be a field of characteristic zero. A variety of algebras over
$\mathbb{F}$ is a class defined by polynomial identities in noncommuting,
nonassociating variables. Given a variety $\mathcal V$, an algebra is
called a {\it mono $\mathcal V$-algebra} if each of its one-generated
subalgebras belongs to $\mathcal V$, and a {\it binary $\mathcal V$-algebra}
if each of its two-generated subalgebras belongs to $\mathcal V$.
These constructions lead to the study of identities and structural
properties that can be detected on subalgebras with a prescribed number
of generators. For example, Artin's theorem identifies binary
associative algebras with alternative algebras; see the discussion and
references in \cite{IMS}.

An algebra $A$ is called a {\it right Zinbiel} algebra if it satisfies
\begin{equation}\label{eq:zinbiel}
a(bc)=(ab)c+(ba)c.
\end{equation}
Zinbiel algebras, also called dual Leibniz algebras, were introduced by
Loday in connection with Leibniz cohomology and the Koszul duality of
operads \cite{Loday}. We use the right convention throughout the paper.
The left convention, used in \cite{DT,NU,Towers}, is obtained by passing
to the opposite algebra.

Ismailov, Mashurov and Smadyarov \cite{IMS} determined defining
identities for mono and binary Zinbiel algebras. In particular, an
algebra over $\mathbb{F}$ is binary Zinbiel if and only if the trilinear map
\begin{equation}\label{eq:defect}
\phi(a,b,c)=a(bc)-(ab)c-(ba)c
\end{equation}
satisfies
\begin{equation}\label{eq:binary-identities}
\phi(a,b,a)=0,
\qquad
\phi(a,a,b)=0.
\end{equation}
Equivalently, the map $\phi$ is alternating. The class of binary Zinbiel algebras properly contains
the class of Zinbiel algebras. The question whether every
finite-dimensional binary Zinbiel algebra is nilpotent was raised in
\cite[Remark~1.5]{IMS}.

Nilpotency for ordinary Zinbiel algebras has been studied from both
structural and identity-theoretic points of view. Dzhumadil'daev and
Tulenbaev \cite{DT} proved an analogue of the Nagata--Higman theorem:
in characteristic zero, a Zinbiel algebra of bounded nil index is
nilpotent. They also proved solvability of finite-dimensional Zinbiel
algebras over algebraically closed fields and nilpotency in
characteristic zero. Towers \cite{Towers} subsequently proved
nilpotency of finite-dimensional Zinbiel algebras over arbitrary
fields by studying minimal ideals. On the level of varieties,
Naurazbekova and Umirbaev \cite{NU} proved that every proper subvariety
of the variety of Zinbiel algebras in characteristic zero is
nilpotent.

Another structural question concerns algebras that can be written as
the sum of two abelian subalgebras. In 1955, It\^o~\cite{Ito} proved
that a group which is the product of two abelian subgroups is
metabelian. Analogues of this theorem have been studied for Lie
algebras~\cite{BK}, associative algebras~\cite{Militaru},
Leibniz algebras~\cite{AM}, Novikov algebras~\cite{SZ},
Poisson algebras~\cite{BCZ}, and non-commutative generalized Poisson algebras~\cite{DIM}.

In this paper, we study defect identities, nilpotency and subvarieties
of binary Zinbiel algebras. We first prove two identities governing
products involving the defect $\phi$. They yield a spanning set for
the free binary Zinbiel algebra consisting of left-normed products in
the free generators and their alternating three-variable defects
(Lemma~\ref{lem:spanning}).
After establishing solvability by the method of
Dzhumadil'daev and Tulenbaev, we adapt the minimal-ideal argument of
Towers and prove that every finite-dimensional binary Zinbiel algebra
over $\mathbb{F}$ is nilpotent (Theorem~\ref{thm:binary-zinbiel-nilpotency}). Finally, we show that a subvariety $\mathcal W$ of binary
Zinbiel algebras is nil if and only if it does not contain the whole
variety of Zinbiel algebras (Proposition~\ref{prop:bounded-nil}).
We also prove that if a binary Zinbiel algebra is the sum of two
abelian subalgebras, then it is metabelian
(Theorem~\ref{thm:ito-binary-zinbiel}). This result requires no
finite-dimensionality assumption.

The paper is organized as follows. Section~\ref{sec:defects} contains
the defect identities and the spanning result. Solvability in finite
dimension is established in Section~\ref{sec:solvability}.
Section~\ref{sec:nilpotency} treats minimal ideals and proves the
nilpotency theorem. Section~\ref{sec:subvarieties} is devoted to nil
subvarieties.
Section~\ref{sec:ito} establishes the analogue of It\^o's theorem.

\section{Preliminaries and defect identities}\label{sec:defects}

Throughout the paper, $\mathbb{F}$ is a field of characteristic zero, and
algebras are $\mathbb{F}$-vector spaces with a bilinear multiplication; no unit
is assumed. All Zinbiel identities are written in the right
convention~\eqref{eq:zinbiel}. The varieties of right Zinbiel and
right binary Zinbiel algebras are denoted by $\Zinb$ and $\BinZinb$,
respectively. By a two-generated subalgebra we mean one generated by at most two
elements.

For subspaces $U,V$ of an algebra $A$, put
$$
UV=\spn_\mathbb{F}\{uv:u\in U,\ v\in V\}.
$$
A left ideal $B$ satisfies $AB\subseteq B$; a two-sided ideal also
satisfies $BA\subseteq B$. The derived series and the left-normed
powers of $A$ are defined by
\begin{equation}\label{eq:series}
\begin{aligned}
A^{(0)}&=A, & A^{(k+1)}&=A^{(k)}A^{(k)} &&(k\ge0),\\
A^1&=A, & A^{k+1}&=A^kA &&(k\ge1).
\end{aligned}
\end{equation}
The algebra $A$ is \emph{solvable} if $A^{(r)}=0$ for some $r$.
It is \emph{nilpotent} if there is an integer $N$ such that every
product of at least $N$ elements, with arbitrary parentheses, is zero.
For an element $a\in A$, write
\begin{equation}\label{eq:element-powers}
a^{[1]}=a,\qquad a^{[k+1]}=a^{[k]}a.
\end{equation}
The algebra is \emph{nil} if for every $a\in A$ there is an integer
$n(a)$ such that $a^{[n(a)]}=0$. A variety is nil if all of its
members are nil. It has uniformly bounded nil index if it satisfies
$x^{[N]}=0$ for one fixed $N$.

Let $A$ be a binary Zinbiel algebra. Linearizing
\eqref{eq:binary-identities} gives
$$
\phi(a,b,c)+\phi(c,b,a)=0,
\qquad
\phi(a,c,b)+\phi(c,a,b)=0.
$$
Thus, for every $\sigma\in S_3$,
\begin{equation}\label{iden-1}
\phi(x_1,x_2,x_3)=\operatorname{sgn}(\sigma)
\phi(x_{\sigma(1)},x_{\sigma(2)},x_{\sigma(3)}).
\end{equation}

\begin{lemma}\label{lem:defect-identities}
Every binary Zinbiel algebra satisfies the identities
\begin{equation}\label{iden-2}\phi(ab,c,d)=a\phi(b,c,d).\end{equation}
\begin{equation}\label{iden-3}\phi(\phi(a,b,c),d,e)=0.\end{equation}
\end{lemma}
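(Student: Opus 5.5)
The plan is to prove \eqref{iden-2} first by a direct expansion valid in any algebra, and then deduce \eqref{iden-3} from \eqref{iden-2} and the alternating property \eqref{iden-1}.

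\textbf{Identity \eqref{iden-2}.} Since both sides are multilinear of degree~4, I would work with the expansion rule
$$x(yz)=\phi(x,y,z)+(xy)z+(yx)z,$$
which holds in every algebra by the definition \eqref{eq:defect} of $\phi$.
\begin{enumerate}
\item Write $a\phi(b,c,d)=a(b(cd))-a((bc)d)-a((cb)d)$.
\item Expand each term on the right with the rule above, and keep expanding every product whose right factor is itself a product, until all remaining terms are left-normed or contain a single $\phi$.
\item Do the same for $\phi(ab,c,d)=(ab)(cd)-((ab)c)d-(c(ab))d$.
\item Subtracting gives an identity, valid in an arbitrary algebra, that expresses $\phi(ab,c,d)-a\phi(b,c,d)$ through terms of two kinds: defects with one product argument, such as $\phi(a,b,cd)$, $\phi(a,bc,d)$, $\phi(ba,c,d)$, $\phi(c,ab,d)$; and products such as $\phi(a,b,c)d$.
\end{enumerate}
Next I would write this relation for all $24$ permutations of $(a,b,c,d)$. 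Using \eqref{iden-1}, I would move the product argument of each defect into a fixed position. This yields a linear system in the unknowns $\phi(xy,z,w)$, $x\phi(y,z,w)$ and $\phi(x,y,z)w$.

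The goal is to show that this system forces $\phi(ab,c,d)=a\phi(b,c,d)$. If the four-variable consequences are not enough, I would also use that every identity in two variables holding in $\Zinb$ holds in $A$. That is, I would substitute $c=a$ or $d=b$, where the corresponding defects vanish by \eqref{eq:binary-identities}, and then relinearize. Solving this linear system, which amounts to an $S_4$-module computation, is the step I expect to be the main obstacle.

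\textbf{Identity \eqref{iden-3}.} Put $G=\phi(c,d,e)$. Write $\phi(a,b,c)=a(bc)-(ab)c-(ba)c$ and apply \eqref{iden-2} to each of the three terms:
$$\phi(\phi(a,b,c),d,e)=a\phi(bc,d,e)-(ab)G-(ba)G .$$
Applying \eqref{iden-2} once more to $a\phi(bc,d,e)=a(bG)$ gives
$$\phi(\phi(a,b,c),d,e)=\phi(a,b,\phi(c,d,e)).$$
By \eqref{iden-1}, the right-hand side equals $\phi(\phi(c,d,e),a,b)$.

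Hence $F(a,b,c,d,e)=\phi(\phi(a,b,c),d,e)$ is alternating in $\{a,b,c\}$, alternating in $\{d,e\}$, and satisfies $F(a,b,c,d,e)=F(c,d,e,a,b)$. In particular $F$ is alternating in $\{c,d,e\}$. Since the transpositions involved generate $S_5$, $F$ is totally alternating.

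To conclude that $F=0$, I would combine total alternation with the explicit form $F=a(bG)-(ab)G-(ba)G$. Symmetrizing in $a,b$ kills $F$, so $(ab+ba)G=\tfrac12\bigl(a(bG)+b(aG)\bigr)$. Then I would feed in $\phi(ab+ba,d,e)=a\phi(b,d,e)+b\phi(a,d,e)$, which comes from \eqref{iden-2}, together with the substitution $b=a$, to extract a relation of the form $\lambda F=0$ with $\lambda\neq0$. Characteristic zero then gives $F=0$.
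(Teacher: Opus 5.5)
\textbf{Identity \eqref{iden-2}.} Your route is the paper's in substance: a degree-four relation among defects that holds in every algebra, permuted and reduced by alternation. However, you stop at the only nontrivial step, so as written this is a plan rather than a proof. The step does close using the four-variable relation alone; substitutions like $c=a$ only reproduce \eqref{iden-1}. Your expansion gives, in any algebra,
\begin{multline*}
a\phi(b,c,d)=\phi(a,b,cd)+\phi(ab,c,d)+\phi(ba,c,d)-\phi(a,bc,d)-\phi(a,cb,d)\\
+\bigl(\phi(c,a,b)+\phi(c,b,a)-\phi(a,b,c)-\phi(a,c,b)\bigr)d,
\end{multline*}
and the bracket vanishes by alternation. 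Put $X(x,y,z,w)=\phi(xy,z,w)-x\phi(y,z,w)$. Then all terms $x\phi(y,z,w)$ cancel, and you get $X(a,b,c,d)+X(b,a,c,d)+X(b,c,a,d)+X(c,b,a,d)+X(c,d,a,b)=0$. Add this to its copy with $a$ and $b$ interchanged, and write $Y(x,y;z,w)=X(x,y,z,w)+X(y,x,z,w)$. This gives $2Y(a,b;c,d)+Y(b,c;a,d)+Y(a,c;b,d)=0$. Cycling $a,b,c$ yields a nonsingular $3\times 3$ system, so $Y=0$, and the relation then reduces to $X(c,d,a,b)=0$. The paper does the same job with its identity $S$ and an explicit combination of eight of its permutations. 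You need to supply some such computation.

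\textbf{Identity \eqref{iden-3}: the genuine gap.} Your reduction $\phi(\phi(a,b,c),d,e)=\phi(a,b,G)$ with $G=\phi(c,d,e)$ is correct, and so is the total alternation of $F$. Your symmetrization already gives $2F=a(bG)-b(aG)$. But total alternation does not by itself force a $5$-linear expression to vanish, and the inputs you name supply nothing new. Rewritten with \eqref{iden-2}, your symmetrized relation is just $\phi\bigl(\phi(a,b,c)+\phi(b,a,c),d,e\bigr)=0$. The substitution $b=a$ gives $F(a,a,c,d,e)=0$. Both are trivial by alternation, so no relation $\lambda F=0$ with $\lambda\neq0$ emerges.

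What is missing is the commutation $a(bG)=b(aG)$. The paper obtains it in two steps. First, alternation lets \eqref{iden-2} extract a left factor from \emph{any} argument: $\phi(x,y,zt)=z\phi(x,y,t)$. Second, it expands a defect with two product arguments in two orders: $\phi(bc,d,ae)=b\bigl(a\phi(c,d,e)\bigr)=a\bigl(b\phi(c,d,e)\bigr)$. With this, your formula $2F=a(bG)-b(aG)$ gives $F=0$ at once, and the detour through total alternation is unnecessary.
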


\begin{proof}
Set
$$
q(a,b,c)
=
\phi(a,b,c)+\phi(b,a,c),
$$
and $$
\begin{aligned}
S(a,b,c,d)=&
2\phi(ab,c,d)
+2\phi(ba,c,d)
+\phi(a,b,cd)
\\
&-2\phi(a,c,bd)
-2a\phi(b,c,d)
-c\phi(a,b,d).
\end{aligned}
$$
A direct calculation gives 
$$
S(a,b,c,d)=
q(ab,c,d)+q(ba,c,d)-q(a,c,bd)-a\,q(b,c,d).$$
Indeed,
$$q(ab,c,d)
=(ab)(cd)+c((ab)d)-2((ab)c)d-2(c(ab))d,$$
$$q(ba,c,d)=
(ba)(cd)+c((ba)d)-2((ba)c)d-2(c(ba))d,$$
$$q(a,c,bd)=a(c(bd))+c(a(bd))-2(ac)(bd)-2(ca)(bd),$$
$$aq(b,c,d)=
a(b(cd))+a(c(bd))-2a((bc)d)-2a((cb)d).$$
This linear combination is precisely the expansion of $S(a,b,c,d)$.
Since $q=0$ by alternation, we obtain $S(a,b,c,d)=0$.

We now prove \eqref{iden-2}.
Since $S=0$, every permutation of its variables is also an identity.
Consider
$$
\begin{aligned}
0={}&
2S(a,b,c,d)
+2S(a,b,d,c)
+4S(b,d,c,a)
\\
&-4S(c,a,b,d)
-2S(c,a,d,b)
-S(c,d,a,b)
\\
&+4S(d,a,b,c)
+4S(d,b,a,c).
\end{aligned}
$$
Substituting the definition of $S$ and using the alternating property
of $\phi$, all terms cancel except
$$
3\phi(ab,c,d)-3a\phi(b,c,d).
$$
Hence
$$
3\phi(ab,c,d)-3a\phi(b,c,d)=0.
$$
Since $char\mathbb{F}=0$,
$$
\phi(ab,c,d)=a\phi(b,c,d).
$$
We next prove \eqref{iden-3}. 
First, by alternation and \eqref{iden-2},
$$
\phi(x,y,zt)
=
\phi(zt,x,y)
=
z\phi(t,x,y)
=
z\phi(x,y,t).
$$
Hence
\begin{equation}\label{eq:phi-third-product}
\phi(x,y,zt)=z\phi(x,y,t).
\end{equation}

Now calculate $\phi(bc,d, ae)$ in two ways. On the one hand,
$$
\phi(bc,d,ae)
=
b\phi(c,d,ae)
=
b\bigl(a\phi(c,d,e)\bigr).
$$
On the other hand, by \eqref{eq:phi-third-product},
$$
\phi(bc,d,ae)
=
a\phi(bc,d,e)
=
a\bigl(b\phi(c,d,e)\bigr).
$$
Therefore
\begin{equation}\label{eq:commuting-on-defects}
a\bigl(b\phi(c,d,e)\bigr)
=b\bigl(a\phi(c,d,e)\bigr).
\end{equation}

Put $u=\phi(c,d,e)$. By \eqref{eq:commuting-on-defects},
we have $a(bu)=b(au)$. Hence
$$
0=q(a,b,u)
 =2\bigl(a(bu)-(ab)u-(ba)u\bigr)
 =2\phi(a,b,u).
$$
Since $\operatorname{char}\mathbb{F}=0$, it follows that
$$
\phi(a,b,\phi(c,d,e))=0.
$$
Finally, by alternation,
$$
\phi(\phi(a,b,c),d,e)
=\phi(d,e,\phi(a,b,c))
=0.
$$
\end{proof}

For a binary Zinbiel algebra $A$, let
\begin{equation}\label{eq:defect-space}
D=\spn_\mathbb{F}\{\phi(a,b,c):a,b,c\in A\}.
\end{equation}
Lemma~\ref{lem:defect-identities} and alternation imply
\begin{equation}\label{eq:defect-space-properties}
AD\subseteq D,
\qquad
\phi(D,A,A)=\phi(A,D,A)=\phi(A,A,D)=0.
\end{equation}

Let $\mathcal F(X)$ be the free binary Zinbiel algebra on
$$
X=\{x_1,x_2,\ldots\},
$$
and put
$$
Y=\{\phi(x_i,x_j,x_k):i<j<k\},
\qquad Z=X\cup Y.
$$

\begin{lemma}\label{lem:spanning}
Every element of the free binary Zinbiel algebra $\mathcal F(X)$ is a linear
combination of left-normed products
$$
(\cdots((z_1z_2)z_3)\cdots)z_m,
\qquad z_1,\ldots,z_m\in Z.
$$
\end{lemma}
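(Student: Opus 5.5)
Let $L$ denote the linear span of all left-normed products $(\cdots((z_1z_2)z_3)\cdots)z_m$ with $z_i\in Z$. Since $\mathcal F(X)$ is spanned by all nonassociative monomials in $X$ and $X\subseteq L$, it suffices to show that $L$ is closed under multiplication, i.e.\ $uv\in L$ for all $u,v\in L$. By the definition of left-normed products, $u z\in L$ for $u\in L$, $z\in Z$. So the plan is to prove, by induction on the length $m$ of the right factor $v=(\cdots(z_1z_2)\cdots)z_m$ (and then on total degree), that $uv\in L$.

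The basic rewriting tool is the definition of the defect: for any $a,b,c$,
$$
a(bc)=(ab)c+(ba)c+\phi(a,b,c).
$$
If $v=v'z_m$ with $m\ge 2$, then $uv=(uv')z_m+(v'u)z_m+\phi(u,v',z_m)$. The first two terms are products of elements of lower "right length" followed by $z_m$; by induction (on the length of the right factor, with total degree fixed) $uv'$ and $v'u$ lie in $L$, hence right-multiplying by $z_m$ stays in $L$. The work is then to show that $\phi(u,v',z_m)\in L$. For this I use Lemma~\ref{lem:defect-identities}: by \eqref{iden-2}, \eqref{eq:phi-third-product} and alternation, $\phi$ is "left-linear" in each argument: whenever an argument is a product $st$, we may pull $s$ out to the left, $\phi(st,\cdot,\cdot)=s\phi(t,\cdot,\cdot)$. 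Repeatedly applying this to all three arguments, every $\phi(w_1,w_2,w_3)$ with monomials $w_i$ becomes $s_1(s_2(\cdots(s_r\,\phi(x_i,x_j,x_k))\cdots))$ with letters $s_l$ and $x_i,x_j,x_k\in X$; by alternation $\phi(x_i,x_j,x_k)$ is $\pm$ an element of $Y$ or zero. If one argument lies in $D$ it vanishes by \eqref{iden-3}, so defects of defects cause no trouble (a $y\in Y$ occurring inside an argument would be peeled to a $\phi$ with a defect argument, giving $0$).

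It remains to show that $s_1(s_2(\cdots(s_r y)\cdots))\in L$ for $y\in Y$ and $s_l\in Z$ (more generally a right-normed product ending in an element of $D$). This is again handled by the identity $a(bc)=(ab)c+(ba)c+\phi(a,b,c)$ applied to the innermost nested product: $s_{r-1}(s_r y)=(s_{r-1}s_r)y+(s_rs_{r-1})y+\phi(s_{r-1},s_r,y)$, and the last term is $0$ by \eqref{iden-3}. Thus, in products of the form $a(b\,d)$ with $d\in D$, the Zinbiel identity holds exactly, and one can convert such right-normed products into combinations of $(\text{stuff})\,y$, and then this reduces to the general claim $uv\in L$ with $v=y$ of length one, which is immediate. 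One must organize the induction so that these reductions strictly decrease a suitable measure; I would use induction on total degree, and within a fixed degree on the length of the right factor in the left-normed expression.

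The main obstacle I expect is making this induction well-founded: the rewriting $a(bc)\mapsto (ab)c+(ba)c+\phi(a,b,c)$ preserves degree, and the $\phi$-term, after peeling, produces right-normed products which must be re-expanded. I would handle this by a double induction: the outer induction on degree $n$ guarantees that all elements of degree $<n$ lie in $L$ (so $D$-elements of lower degree and the arguments of $\phi$ are already in $L$), and the inner induction on the length of the right factor $v$ handles the remaining terms, noting that in $(ab)y$-type terms the right factor $y$ has length one.
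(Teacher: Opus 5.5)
Your argument is correct once the induction on degree is taken as the primary one, as in your last paragraph. It disposes of the defect term differently from the paper.

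The paper peels a single factor from each of the first two arguments, getting $\phi(u,v',t_q)=-u'\bigl(v''\phi(t_{q-1},z_p,t_q)\bigr)$. Since $u'(v''y)$ has the same degree as $uv$, the paper needs an auxiliary induction on the number $p+q$ of $Z$-factors.

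You instead observe that for $d\in D$ one has exactly $a(wd)=(aw+wa)d$, because $\phi(a,w,d)=\phi(d,a,w)=0$ by \eqref{iden-3}. So the fully peeled tower $s_1(s_2(\cdots(s_ry)\cdots))$ collapses to $Wy$ with $\deg W=n-3<n$. Then $W\in L$ by the degree hypothesis, and $Wy\in L$. This makes a single induction on degree suffice.

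Your inner induction on the length of the right factor is superfluous. As stated, it would not even cover $(v'u)z_m$, since the right factor of $v'u$ is $u$, which can be arbitrarily long. That term, like $(uv')z_m$, is in $L$ simply because $\deg(v'u)<n$. Likewise, the factors $s_l$ produced by repeated use of \eqref{iden-2} are arbitrary monomials (left factors of the arguments), not letters of $Z$. This is harmless for the same reason.

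What your route buys is a one-parameter induction. It also gives the by-product that every defect is a linear combination of elements $y$ and $Wy$ with $y\in Y$. The paper's route stays inside left-normed $Z$-words throughout and never expands elements of $Y$ into $X$-monomials. Your collapse identity would also let the paper rewrite $u'(v''y)=(u'v''+v''u')y$ directly, removing the need for its count of $Z$-factors.
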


\begin{proof}
We use induction on the degree of monomials in the generators $X$.

The statement is clear in degrees $1$ and $2$. For degree $3$, by the
definition of $\phi$,
$$
a(bc)=(ab)c+(ba)c+\phi(a,b,c),
$$
and hence the statement holds.

Assume that the statement is true for all monomials of degree less
than $n$. Let $w$ be a monomial of degree $n$. Write
$$
w=uv,
\qquad
\deg u<n,\quad \deg v<n.
$$
By the induction assumption, it is sufficient to consider
$$
u=(\cdots((z_1z_2)z_3)\cdots)z_p, \quad v=(\cdots((t_1t_2)t_3)\cdots)t_q,
$$
where $z_i,t_j\in Z$.

We prove by an auxiliary induction on the number $p+q$ of
$Z$-factors that $uv$ is a linear combination of left-normed products
in $Z$.

If $q=1$, then $uv$ is already left-normed. Let $q\geq2$ and write
$$
v=v't_q.
$$
By the definition of $\phi$,
$$
u(v't_q)
=
(uv')t_q+(v'u)t_q+\phi(u,v',t_q).
$$
By the induction assumption on $p+q$, the first two terms belong to
the required linear span.

It remains to consider $\phi(u,v',t_q)$. If $p\geq2$, write
$$
u=u'z_p.
$$
By \eqref{iden-2},
$$
\phi(u,v',t_q)
=
u'\phi(z_p,v',t_q).
$$
If $q\geq3$, write
$$
v'=v''t_{q-1}.
$$
Using \eqref{iden-1} and \eqref{iden-2}, we obtain
$$
\phi(z_p,v',t_q)
=
-\phi(v',z_p,t_q)
=
-v''\phi(t_{q-1},z_p,t_q).
$$
Therefore,
$$
\phi(u,v',t_q)
=
-u'\bigl(v''\phi(t_{q-1},z_p,t_q)\bigr).
$$

If $r,s,t\in X$, then $\phi(r,s,t)=0$ when two arguments
coincide; otherwise, by alternation, $\phi(r,s,t)=\pm y$ for some
$y\in Y$. If one of the arguments belongs to $Y$, then
\eqref{iden-1} and \eqref{iden-3} give
$$
\phi(r,s,t)=0.
$$
Thus, for $p\geq2$ and $q\geq3$, the term $\phi(u,v',t_q)$ is either
zero or is, up to sign, of the form
$$
u'(v''y),
\qquad y\in Y.
$$
The latter involves fewer than $p+q$ elements of $Z$, so the induction
assumption applies again.

If $p=1$ and $q\geq3$, then
$$
\phi(u,v',t_q)
=
-v''\phi(t_{q-1},u,t_q),
$$
which is either zero or a scalar multiple of a left-normed product
in $Z$. If $p\geq2$ and
$q=2$, then
$$
\phi(u,t_1,t_2)
=
u'\phi(z_p,t_1,t_2),
$$
and the same conclusion holds. Finally, if $p=1$ and $q=2$, then
$\phi(u,t_1,t_2)$ is zero or equals $\pm y$ for some $y\in Y$.

Hence $uv$ is a linear combination of left-normed products in $Z$.
The induction is complete.
\end{proof}

\section{Solvability in finite dimension}\label{sec:solvability}

Throughout this section, $A$ is a finite-dimensional right binary
Zinbiel algebra over a field $\mathbb{F}$. The defect
space $D$ is defined by
\eqref{eq:defect-space}.

Following Dzhumadil'daev and Tulenbaev~\cite[Section~2]{DT}, we use a
maximal-left-annihilator argument. For $a\in A$, put
$$
C(a)=\{x\in A:xa=0\}.
$$

\begin{lemma}\label{lem:annihilated-defect}
If $D\neq0$, then there exists $0\neq a\in D$ such that $Aa=0$.
\end{lemma}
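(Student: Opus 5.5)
We need: if $D\neq0$, there is $0\neq a\in D$ with $Aa=0$. The plan is a maximal-left-annihilator argument, using the identities \eqref{eq:commuting-on-defects} and \eqref{eq:defect-space-properties}. Since $A$ is finite-dimensional, among all nonzero $u\in D$ we choose one with $\dim C(u)$ maximal. The aim is to show $C(u)=A$. If not, pick $b\in A$ with $bu\neq 0$ and set $a=bu$; then $a\in D$ because $AD\subseteq D$.

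The key step is to show $C(u)\subsetneq C(bu)$, contradicting maximality. The main tools are the following consequences of the earlier identities:
\[
x(bu)=b(xu)\quad\text{for all } x,b\in A,\ u\in D,
\]
which holds because $\phi(x,b,u)=0$ and $x(bu)=b(xu)$ (equation \eqref{eq:commuting-on-defects} extended linearly to $D$), and
\[
x(bu)=(xb)u+(bx)u .
\]

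For the inclusion $C(u)\subseteq C(bu)$: if $xu=0$, then $x(bu)=b(xu)=0$. For strictness, I would take $x=b$. Then $b(bu)=2(bb)u$ from the second identity. Showing $b(bu)=0$ directly may fail, so instead I would iterate. The sequence $u, bu, b(bu),\dots$ has weakly increasing left annihilators $C(u)\subseteq C(bu)\subseteq C(b(bu))\subseteq\cdots$, all nonzero as long as the terms are nonzero, and by maximality all are equal while nonzero.

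This is where I expect the main obstacle: showing that the left multiplication $L_b$ restricted to $D$ is nilpotent, or at least that some iterate kills. My first attempt is as follows. By the commuting identity, the operators $L_x|_D$ pairwise commute. Moreover, $L_xL_y|_D = (L_{xy}+L_{yx})|_D$, so the span of $\{L_x|_D\}$ is a commutative associative algebra $\mathcal L$ of operators on $D$ that is closed under products. It is a homomorphic image of the symmetrized (commutative) algebra $A^+$ with product $x\circ y=xy+yx$.

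I would then argue that $\mathcal L$ consists of nilpotent operators. Take $\lambda$ an eigenvalue of $L_b$, passing to the algebraic closure if needed (the existence of a common kernel vector descends, since it is a linear condition). Then $L_b^n=L_{w_n}$, where $w_n$ is $2^{n-1}(n-1)!$ times the Zinbiel power of $b$. Traces of powers, $\operatorname{tr}(L_b^n)=\operatorname{tr}(L_{w_n})$, could be compared with $\operatorname{tr}L_{w}$, using the mono-Zinbiel identity $b^{[k]}b^{[l]}$-type relations (one-generated subalgebras are Zinbiel, hence nilpotent by \cite{DT}/\cite{Towers}). Indeed, the simpler route is this: the subalgebra generated by $b$ is Zinbiel and finite-dimensional, hence nilpotent, so $w_n=0$ for large $n$, giving $L_b^n|_D=0$.

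So $\mathcal L$ is a commutative algebra of nilpotent operators on the nonzero finite-dimensional space $D$. Hence, by the standard simultaneous-triangularization (Engel-type) lemma for commuting nilpotent operators, there is $0\neq a\in D$ with $L_xa=0$ for all $x$, that is, $Aa=0$. This bypasses the maximality argument entirely, with nilpotency of one-generated subalgebras as the key input.
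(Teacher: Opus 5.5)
Your argument is correct, but it takes a different route from the paper.

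\textbf{The paper's route.} The paper keeps the Dzhumadil'daev--Tulenbaev maximal-annihilator setup. It picks $0\neq a_0\in D$ with $\dim C(a_0)$ maximal. From $C(a)\subseteq C(ba)$ and maximality it gets $C(a_0)=C(ba_0)$ whenever $ba_0\neq0$. Then, assuming $b_0a_0\neq0$, it runs the squaring sequence $b_{k+1}=b_kb_k$. The identity $b_k(b_ka_0)=2b_{k+1}a_0$ forces $b_ka_0\neq0$ for every $k$. This contradicts $b_k=0$ for large $k$, which follows from nilpotency of the Zinbiel algebra $\langle b_0\rangle$.

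\textbf{Your route.} You drop maximality and work with the operators $L_x|_D$ instead. The identities coming from $\phi(A,A,D)=0$ show that $\{L_x|_D : x\in A\}$ is a commutative family closed under products. Because $x\mapsto L_x|_D$ is linear, this family is exactly its own span $\mathcal L$. You also get $L_b^n|_D=L_{w_n}|_D$, where $w_n$ is a combination of products of $n$ copies of $b$. Nilpotency of $\langle b\rangle$ then makes each $L_x|_D$ nilpotent, and the common-null-vector lemma for commuting nilpotent operators finishes.

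\textbf{Minor points.} In a Zinbiel algebra one has $b\,b^{[k]}=k\,b^{[k+1]}$, so in fact $L_b^n|_D=n!\,L_{b^{[n]}}|_D$; your constant $2^{n-1}(n-1)!$ is off for $n\ge3$. This is harmless, since only vanishing matters. The eigenvalue and trace digression, and the passage to the algebraic closure, are unnecessary: commuting nilpotent operators have a common null vector over any field. Your abandoned maximality attempt would also have closed immediately with this nilpotency. If $L_b^ku\neq0=L_b^{k+1}u$, then $b\in C(L_b^ku)=C(u)$, contradicting $bu\neq0$. This is the paper's proof with $L_b$ in place of the squaring sequence.

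\textbf{What each route buys.} The paper's proof is more self-contained: it needs only that iterated squares of $b_0$ vanish, and no linear-algebra lemma. Your version yields more. $\mathcal L$ is spanned by finitely many commuting nilpotent operators, so it is a nilpotent associative algebra. Hence $x_1(x_2(\cdots(x_Kd)\cdots))=0$ for a fixed $K$ and all $x_i\in A$, $d\in D$. So $D$ carries a whole flag of $A$-invariant subspaces with trivial left action on the quotients, not just a single annihilated vector.
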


\begin{proof}
For $a\in D$ and $x,b\in A$, the vanishing in
\eqref{eq:defect-space-properties} gives
$$
x(ba)=(xb+bx)a=b(xa).
$$
It follows that
$$
C(a)\subseteq C(ba).
$$

Choose $0\neq a_0\in D$ for which $\dim C(a_0)$ is maximal.
Since $ba_0\in D$, the preceding inclusion and maximality give
$$
C(a_0)=C(ba_0)
\qquad\text{whenever }ba_0\neq0.
$$
Suppose that $C(a_0)\neq A$. Choose $b_0\in A$ such that
$b_0a_0\neq0$, and define
$$
b_{k+1}=b_kb_k
\qquad(k\ge0).
$$
We claim that $b_ka_0\neq0$ for all $k$. The assertion holds
for $k=0$. If $b_ka_0\neq0$ but $b_{k+1}a_0=0$, the identity
$\phi(b_k,b_k,a_0)=0$ gives
$$
b_k(b_ka_0)=2(b_kb_k)a_0=2b_{k+1}a_0=0.
$$
Hence
$$
b_k\in C(b_ka_0)=C(a_0),
$$
contrary to $b_ka_0\neq0$. This proves the claim.

Let $B=\langle b_0\rangle$ be the one-generated subalgebra of $A$.
By the definition of binary Zinbiel algebras, $B$ is a right
Zinbiel algebra. It is finite-dimensional, so the ordinary
Zinbiel nilpotency theorem \cite[Theorem~2.4]{Towers},
applied to $B^{\mathrm{op}}$, implies that $B$ is nilpotent.
Thus sufficiently long products in $B$ vanish with arbitrary
parentheses. Since $b_k$ is a product of $2^k$ copies of $b_0$,
we obtain $b_k=0$ for all sufficiently large $k$.
This contradicts $b_ka_0\neq0$. Therefore $C(a_0)=A$.
\end{proof}

\begin{lemma}\label{lem:proper-ideal}
If $\dim A>1$, then $A$ has a nonzero proper two-sided ideal.
\end{lemma}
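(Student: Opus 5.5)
The plan is a case analysis that reduces the statement to the situation where $A$ is perfect and not Zinbiel. I then build an explicit two-sided ideal out of the defect space $D$ of \eqref{eq:defect-space}.

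\emph{Reductions.} The subspace $A^2=AA$ is always a two-sided ideal. If $0\neq A^2\neq A$ we are done. If $A^2=0$, then every one-dimensional subspace is a nonzero proper ideal, because $\dim A>1$. So we may assume $A^2=A$. If moreover $D=0$, then $A$ is a finite-dimensional Zinbiel algebra. By Towers' theorem \cite[Theorem~2.4]{Towers}, applied to $A^{\mathrm{op}}$, it is nilpotent, which contradicts $A^2=A\neq0$. Hence we may also assume $D\neq0$.

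\emph{An ideal containing $D$.} Let $I$ be the span of all right-normed multiples $(\cdots((d x_1)x_2)\cdots)x_k$ with $d\in D$, $x_i\in A$ and $k\ge0$. By construction $IA\subseteq I$. To get $AI\subseteq I$, I argue by induction on $k$. The case $k=0$ is $AD\subseteq D$, from \eqref{eq:defect-space-properties}. For $u\in I$ and $x,y\in A$ we have
\[
y(ux)=(yu)x+(uy)x+\phi(y,u,x).
\]
Here $yu\in I$ by induction, so $(yu)x\in I$. Also $uy\in I$, so $(uy)x\in I$. Finally $\phi(y,u,x)\in D\subseteq I$. Thus $I$ is a nonzero two-sided ideal.

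\emph{The case $I=A$, which I expect to be the main obstacle.} It remains to rule out $I=A$, or to produce another ideal in that case. I would use Lemma~\ref{lem:annihilated-defect} to pick $0\neq a\in D$ with $Aa=0$. Let $I_a$ be the span of the right multiples of $a$ together with the elements $\phi(y,u,x)$ that arise from them, built as in the previous step. For $y\in A$ we have
\[
y(ax)=(ya)x+(ay)x+\phi(y,a,x)=(ay)x,
\]
because $ya=0$ and $\phi(D,A,A)=0$. Hence left multiplication by $y$ on right multiples of $a$ reduces to right multiplication. This gives an ideal generated by $a$ whose elements can be tracked explicitly.

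Properness of $I_a$ should come from a minimality argument. If every such ideal equals $A$, then $A$ is spanned by right multiples of elements of $D$. Since $\phi(D,A,A)=0$, I would then use \eqref{iden-2} and \eqref{eq:phi-third-product} to show that $\phi(A,A,A)$ is contained in $D\cdot A$-products of defects that vanish by \eqref{iden-3}. That would force $D=0$, a contradiction. Verifying this vanishing, in particular controlling $\phi$ when one argument is a right multiple $dx$ of a defect rather than a left multiple, is the step I expect to need the most care. If it fails directly, I would fall back on a dimension-maximality choice of $a$, analogous to the proof of Lemma~\ref{lem:annihilated-defect}, to guarantee that $I_a\neq A$.
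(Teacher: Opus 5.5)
\textbf{There is a genuine gap.} Your reductions to $A^2=A$ and $D\neq0$ are correct, and so is the construction of the ideal $I$ generated by $D$. But the proof stops exactly where the lemma has content: you never show that any ideal you build is proper.

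As far as anything established so far goes, $I$ could equal $A$. Your proposed way to exclude this does not work as stated. For $d\in D$, identity \eqref{iden-2} only rewrites $\phi(dx,y,z)$ as $d\,\phi(x,y,z)\in DD$. Identity \eqref{iden-3} kills defects having a defect as an \emph{argument}, and says nothing about such products. So there is no route to $\phi(A,A,A)=0$ and hence to $D=0$. The ``minimality'' and ``dimension-maximality'' fallbacks are not carried out either. Moreover, your $I_a$ contains $a$ itself (you allow $k\ge0$). Nothing available at this stage rules out $\mathbb{F}a+aA=A$, so no dimension argument can make that ideal proper.

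\textbf{The missing idea.} Take $0\neq a\in D$ with $Aa=0$ from Lemma~\ref{lem:annihilated-defect}. Then the subspace $aA$ \emph{alone} is already a two-sided ideal, and it is proper by a dimension count.
\begin{itemize}
\item \emph{Right multiplication.} Since $\phi(a,x,y)=0$ and $xa=0$, we get $(ax)y=a(xy)$. So right multiples of $a$ never leave $aA$, and nothing needs to be ``tracked''.
\item \emph{Left multiplication.} Your computation $y(ax)=(ay)x$, combined with $(ay)x=a(yx)$, gives $y(ax)=a(yx)\in aA$.
\item \emph{Properness.} Since $a\in A$ and $Aa=0$, we have $aa=0$. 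Hence $a\in\ker L_a$ for $L_a\colon y\mapsto ay$, and $\dim aA\le\dim A-1$.
\item \emph{Nonzero.} If $aA\neq0$, it is the required ideal. If $aA=0$, then $aA=Aa=0$, so $\mathbb{F}a$ is a nonzero two-sided ideal, proper because $\dim A>1$.
\end{itemize}
This is the paper's argument. It makes your reduction to $A^2=A$ and the ideal generated by $D$ unnecessary; Towers' theorem is needed only in the case $D=0$.
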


\begin{proof}
If $D=0$, then $A$ is a right Zinbiel algebra and is nilpotent
by \cite[Theorem~2.4]{Towers}, applied to $A^{\mathrm{op}}$.

Suppose that $D\neq0$. By Lemma~\ref{lem:annihilated-defect},
choose $0\neq a\in D$ such that $Aa=0$, and put
$$
I=aA=\{ay:y\in A\}.
$$
Since every defect with an argument in $D$ vanishes,
for all $x,y\in A$ we have
$$
(ay)x=a(yx)-(ya)x=a(yx)\in I
$$
and
$$
x(ay)=(xa+ax)y=(ax)y=a(xy)\in I.
$$
Thus $I$ is a two-sided ideal.

The nonzero vector $a$ lies in the kernel of
$$
L_a:A\longrightarrow A,\qquad L_a(y)=ay,
$$
because $aa=0$. Therefore
$$
\dim I=\dim\operatorname{im}L_a\le\dim A-1.
$$
If $I\neq0$, this is the required ideal. If $I=0$, then
$aA=Aa=0$, and $\mathbb{F}a$ is a nonzero proper two-sided ideal.
\end{proof}

\begin{theorem}\label{thm:finite-dimensional-solvability}
Every finite-dimensional right binary Zinbiel algebra over
a field of characteristic zero is solvable.
\end{theorem}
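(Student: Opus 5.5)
We argue by induction on $\dim A$. If $\dim A\le 1$, write $A=\mathbb{F}a$. Then $aa=\lambda a$ for some scalar $\lambda$. Since $A$ is binary Zinbiel, it is a Zinbiel algebra, because it is generated by one element. The Zinbiel identity with $a=b=c$ gives $a(aa)=2(aa)a$, so $\lambda^2 a=2\lambda^2 a$, hence $\lambda=0$. Thus $A^{(1)}=AA=0$ and $A$ is solvable. Alternatively, one can invoke \cite[Theorem~2.4]{Towers} applied to $A^{\mathrm{op}}$.

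Now let $\dim A>1$. By Lemma~\ref{lem:proper-ideal}, $A$ has a nonzero proper two-sided ideal $I$. Both $I$ and $A/I$ are binary Zinbiel algebras, because the class is a variety: it is defined by the identities \eqref{eq:binary-identities}, which pass to subalgebras and quotients. Both have dimension strictly less than $\dim A$. By the induction hypothesis, $I^{(r)}=0$ and $(A/I)^{(s)}=0$ for some $r,s$.

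The next step is the standard extension argument. The canonical projection $\pi:A\to A/I$ is a homomorphism, so $\pi(A^{(k)})=(A/I)^{(k)}$ by induction on $k$. Hence $A^{(s)}\subseteq I$. Monotonicity of the derived series, $U\subseteq V\Rightarrow UU\subseteq VV$, then gives $A^{(s+k)}\subseteq I^{(k)}$ for all $k$. In particular $A^{(s+r)}\subseteq I^{(r)}=0$, so $A$ is solvable.

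No step is a real obstacle, since the substantive work is already done in Lemma~\ref{lem:annihilated-defect} and Lemma~\ref{lem:proper-ideal}. Two points need care. First, one must confirm that $I$ and $A/I$ inherit the binary Zinbiel property, which holds because the class is given by the identities \eqref{eq:binary-identities}. Second, one must check that the derived series behaves as claimed under quotients and inclusions, which is immediate from bilinearity. The hypothesis $\operatorname{char}\mathbb{F}=0$ enters only through these earlier lemmas (via \cite{Towers} and Lemma~\ref{lem:defect-identities}).
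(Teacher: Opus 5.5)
Your proposal is correct and follows the paper's proof essentially step for step. The steps match: induction on dimension, the one-dimensional computation $a(aa)=2(aa)a$ forcing $aa=0$, Lemma~\ref{lem:proper-ideal} to produce a nonzero proper ideal $I$, and the extension argument $A^{(s)}\subseteq I\Rightarrow A^{(s+r)}\subseteq I^{(r)}=0$.
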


\begin{proof}
We argue by induction on $n=\dim A$. The case $n=0$ is immediate.
If $n=1$, write $A=\mathbb{F}e$ and $ee=\alpha e$. The identity
$e(ee)=2(ee)e$ yields $\alpha^2e=2\alpha^2e$, so $\alpha=0$.
Thus $A^{(1)}=0$.

Let $n>1$. By Lemma~\ref{lem:proper-ideal}, $A$ has a nonzero
proper two-sided ideal $I$. Both $I$ and $A/I$ are binary
Zinbiel algebras of dimension less than $n$. By induction,
there exist $r,s\ge1$ such that
$$
(A/I)^{(r)}=0,
\qquad I^{(s)}=0.
$$
For the quotient homomorphism $\pi:A\to A/I$, induction on $k$
gives
$$
\pi\bigl(A^{(k)}\bigr)=(A/I)^{(k)},
$$
because $\pi(UV)=\pi(U)\pi(V)$ for all subspaces $U,V\subseteq A$.
Hence $A^{(r)}\subseteq I$. A further induction gives
$$
A^{(r+k)}\subseteq I^{(k)}
\qquad(k\ge0).
$$
In particular, $A^{(r+s)}\subseteq I^{(s)}=0$.
Therefore $A$ is solvable.
\end{proof}

\begin{remark}\label{rem:mono-not-solvable}
The solvability conclusion of
Theorem~\ref{thm:finite-dimensional-solvability} does not extend to
finite-dimensional mono Zinbiel algebras. Note that every
anticommutative algebra over a field of characteristic different from
$2$ is mono Zinbiel. For every element $x$, anticommutativity gives
$x^2=0$, so the subalgebra generated by $x$ has zero multiplication
and is therefore a Zinbiel algebra. In particular, consider the three-dimensional simple Lie algebra
$A=\mathfrak{sl}_2(\mathbb F)$, with multiplication $xy=[x,y]$. Since $A^2=A$, it follows that
$A^{(n)}=A$ for every $n\geq 0$. Thus $A$ is not solvable.
\end{remark}

\section{Minimal ideals and nilpotency}\label{sec:nilpotency}

Unless stated otherwise, $A$ is a finite-dimensional right binary
Zinbiel algebra over a field $\mathbb{F}$.
By Theorem~\ref{thm:finite-dimensional-solvability},
$A$ is solvable. We adapt the minimal-ideal argument of
Towers~\cite[Section~2]{Towers}, using the defect identities from
Section~\ref{sec:defects}.

\begin{lemma}\label{lem:towers-ideal}
Let $B$ be a left ideal of $A$ such that
$$
\phi(B,A,A)=0.
$$
Then $BA$ is a two-sided ideal of $A$.
\end{lemma}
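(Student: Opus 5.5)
We need to show that $A(BA)\subseteq BA$ and $(BA)A\subseteq BA$. The hypothesis $\phi(B,A,A)=0$, combined with alternation \eqref{iden-1}, gives $\phi(A,B,A)=\phi(A,A,B)=0$. Consequently, whenever one argument of $\phi$ lies in $B$, the Zinbiel identity \eqref{eq:zinbiel} holds exactly.

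\textbf{Right closure.} Take $b\in B$ and $x,y\in A$. Since $\phi(b,x,y)=0$,
$$
(bx)y=b(xy)-(xb)y .
$$
The first term $b(xy)$ lies in $BA$. For the second, $B$ is a left ideal, so $xb\in B$, and hence $(xb)y\in BA$. Therefore $(bx)y\in BA$, which gives $(BA)A\subseteq BA$.

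\textbf{Left closure.} Take $b\in B$ and $x,y\in A$. Since $\phi(x,b,y)=0$,
$$
x(by)=(xb)y+(bx)y .
$$
The term $(xb)y$ lies in $BA$ because $xb\in B$. The term $(bx)y$ lies in $BA$ by the right-closure step just proved. Hence $x(by)\in BA$, so $A(BA)\subseteq BA$.

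Both inclusions follow directly from the definition of $\phi$ and alternation; Lemma~\ref{lem:defect-identities} is not needed. The only point requiring care is the order of the two steps: left closure uses right closure, so right closure must be proved first.
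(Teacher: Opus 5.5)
Your proof is correct and follows the paper's argument essentially verbatim: first $(BA)A\subseteq BA$ from $\phi(b,x,y)=0$ and $AB\subseteq B$, then $A(BA)\subseteq BA$ from $\phi(x,b,y)=0$ (by alternation) together with the right-closure step. Your remark that Lemma~\ref{lem:defect-identities} is not needed also matches the paper's proof, which uses only the definition of $\phi$ and its alternating property.
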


\begin{proof}
Since $B$ is a left ideal, $AB\subseteq B$.
For $b\in B$ and $x,y\in A$, the equality
$\phi(b,x,y)=0$ gives
$$
(bx)y=b(xy)-(xb)y.
$$
Both terms on the right belong to $BA$, since $xb\in B$.
Therefore
$$
(BA)A\subseteq BA.
$$

By alternation, $\phi(x,b,y)=0$, and hence
$$
x(by)=(xb)y+(bx)y.
$$
The first term belongs to $BA$ because $xb\in B$,
and the second belongs to $BA$ by the inclusion just proved.
Thus
$$
A(BA)\subseteq BA.
$$
Consequently, $BA$ is a two-sided ideal.
\end{proof}

\begin{prp}\label{prop:towers-minimal-left-ideal}
Let $M$ be a minimal nonzero two-sided ideal of $A$.
Suppose that $B$ is a minimal nonzero left ideal of $A$
such that
$$
B\subseteq M,
\qquad
\phi(B,A,A)=0.
$$
Then $M=B$.
\end{prp}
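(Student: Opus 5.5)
Since $B$ is a left ideal with $\phi(B,A,A)=0$, Lemma~\ref{lem:towers-ideal} shows that $BA$ is a two-sided ideal of $A$. Moreover $BA\subseteq MA\subseteq M$. Minimality of $M$ therefore leaves two cases: either $BA=0$ or $BA=M$. I will show that each case forces $B$ to be a two-sided ideal, and then conclude $B=M$ from minimality of $M$, since $B\neq 0$.

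\emph{Case $BA=0$.} Then $BA=0\subseteq B$, and $AB\subseteq B$ because $B$ is a left ideal. So $B$ is a nonzero two-sided ideal contained in $M$, and minimality gives $B=M$.

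\emph{Case $BA=M$.} Here $B\subseteq M=BA$, and I want to show $BA\subseteq B$. Fix $a\in A$ and consider the subspace
$$
B_a=\{ba+ab: b\in B\}.
$$
It is natural to use $R_a+L_a$ because $\phi(b,x,y)=0$ and alternation give $x(by)=(xb+bx)y$. Let us check that $B_a$ is a left ideal. Using $\phi(x,b,a)=0$ and $\phi(x,a,b)=0$ (alternation),
\begin{align*}
x(ba)+x(ab)&=(xb)a+(bx)a+(xa)b+(ax)b,
\end{align*}
so we need to compare this with $(xb)a+a(xb)\in B_a$. Expanding $a(xb)=(ax)b+(xa)b+\phi(a,x,b)$ and using $\phi(a,x,b)=0$, the difference is $(bx)a$. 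That term lies in $BA$ but not obviously in $B_a$. So this direct route needs modification, and I expect it to be the main obstacle.

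The fallback plan follows Towers more closely. Consider the left ideal $B$ and the map $b\mapsto ba$. From $\phi(x,b,a)=0$ we get $x(ba)=(xb)a+(bx)a$, and $bx\in BA$. The key step I would aim for is to show that $BA\cdot A$ lands appropriately, or to use nilpotent-style induction: show $(BA)A\subseteq$ the span of $B(AA)$ and $(AB)A$. More precisely, iterating the identity $(bx)y=b(xy)-(xb)y$ shows
$$
M=BA=BA^2+\dots\subseteq B A^k + \text{(terms starting in }B\text{)}.
$$
By Theorem~\ref{thm:finite-dimensional-solvability} combined with the structure of the right action, I expect $M=BA$ to force $M=BA^{k}$-type expressions for all $k$. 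If the right multiplications by $A$ restricted to $B$ generate a nilpotent associative action, this yields $M=0$, a contradiction. So the case $BA=M$ can only occur with $B=M$.

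To establish that nilpotence of the right action, I would use that $A^2\neq A$ for a solvable $A$ of positive dimension, and that $M\subseteq A^2$ in this case. Then I would apply an induction on the derived length. This step is where I am least certain, and I would first try to mimic Towers' Section~2 argument for Zinbiel algebras verbatim, replacing each use of the Zinbiel identity by $\phi(B,A,A)=0$ and alternation.
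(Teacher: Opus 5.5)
\textbf{There is a genuine gap in the case $BA=M$.} Your case $BA=0$ is correct and agrees with the paper. The case $BA=M$, however, is left unproved. The $B_a$ computation fails, as you observe yourself. The fallback rests on the unproved hope that the right action of $A$ on $B$ is ``nilpotent associative''.

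Your proposed iteration cannot deliver this. The identity $(bx)y=b(xy)-(xb)y$ always leaves the term $(xb)y$, which lies in $BA$ again, so you are never pushed from $BA$ into $BA^{(k)}$. Trying to prove $BA\subseteq B$ directly is also the wrong target, since this case is in fact impossible.

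\textbf{The missing idea is to kill the left action on $B$ first.} Since $\phi(x,y,b)=\phi(b,x,y)=0$, you have $x(yb)=(xy+yx)b=y(xb)$ for $x,y\in A$ and $b\in B$. Hence $A^2B$ and $AB$ are left ideals contained in $B$.
\begin{itemize}
\item Suppose $A^2B=B$. Starting from $B=A^{(1)}B$, the identity $u(vb)=(uv+vu)b$ gives $B\subseteq A^{(k)}B$ for all $k$. This contradicts solvability (Theorem~\ref{thm:finite-dimensional-solvability}), so $A^2B=0$ by minimality of $B$.
\item Suppose then $AB=B$. This would give $B=A(AB)\subseteq A^2B=0$, so $AB=0$.
\end{itemize}

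\textbf{Finishing the case.} With $AB=0$, the condition $\phi(B,A,A)=0$ collapses to $(bu)v=b(uv)$. This is exactly the associative right action you were looking for. If $BA=M$, then $B\subseteq BA$. Hence $BA\subseteq (BA)A\subseteq BA^{2}$, so $B\subseteq BA^{(1)}$. Inductively, $B\subseteq BA^{(k)}\subseteq (BA^{(k)})A^{(k)}\subseteq BA^{(k+1)}$ for all $k$, which contradicts solvability since $B\neq 0$.

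The paper packages this slightly differently. It observes that $BA^2$ is a two-sided ideal inside $M$. It then gets $BA^2=0$ from minimality of $M$ and the same solvability argument. From this it deduces $MA=AM=0$, $\dim M=1$ and $B=M$. In either form, the step $AB=0$ is what your proposal lacks.
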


\begin{proof}
By Lemma~\ref{lem:towers-ideal}, $BA$ is a two-sided ideal.
Since
$$
BA\subseteq MA\subseteq M,
$$
minimality of $M$ implies
$$
BA=0
\qquad\text{or}\qquad
BA=M.
$$
If $BA=0$, then $B$ is a two-sided ideal, because
$AB\subseteq B$. Hence $B=M$.

Suppose that $BA=M$.

For $x,y\in A$ and $b\in B$, the vanishing of the defect
and its alternation give
$$
x(yb)=(xy+yx)b=y(xb).
$$
It follows that $A^2B$ is a left ideal: for $s\in A^2$,
$$
x(sb)=s(xb)\in A^2B.
$$
Moreover, $A^2B\subseteq B$. Thus minimality of $B$ gives
$$
A^2B=0
\qquad\text{or}\qquad
A^2B=B.
$$

Suppose that $A^2B=B$. If $B\subseteq A^{(k)}B$, then
$$
B
\subseteq A^{(k)}B
\subseteq A^{(k)}\bigl(A^{(k)}B\bigr)
\subseteq A^{(k+1)}B.
$$
The last inclusion follows from
$$
u(vb)=(uv+vu)b
\qquad(u,v\in A^{(k)},\ b\in B).
$$
Starting with $B=A^{(1)}B$, induction gives
$$
B\subseteq A^{(k)}B
\qquad(k\ge1).
$$
This contradicts solvability of $A$ and $B\neq0$.
Therefore
$$
A^2B=0.
$$

The equality $x(yb)=y(xb)$ also shows that $AB$
is a left ideal contained in $B$.
If $AB=B$, then
$$
B=A(AB)\subseteq A^2B=0,
$$
a contradiction. Consequently,
$$
AB=0.
$$

We next show that $BA^2$ is a two-sided ideal.
For $b\in B$, $s\in A^2$, and $x\in A$, we have
$$
(bs)x=b(sx)-(sb)x=b(sx)\in BA^2,
$$
because $sb=0$.
Similarly,
$$
x(bs)=(xb+bx)s=(bx)s=b(xs)\in BA^2.
$$
Thus $BA^2$ is a two-sided ideal contained in $M$.
Minimality of $M$ gives
$$
BA^2=0
\qquad\text{or}\qquad
BA^2=M.
$$

Suppose that $BA^2=M$. Then $B\subseteq BA^{(1)}$.
Since $AB=0$ and $\phi(B,A,A)=0$, we have
$$
(bu)v=b(uv)
\qquad(b\in B,\ u,v\in A).
$$
Therefore, if $B\subseteq BA^{(k)}$, then
$$
B
\subseteq BA^{(k)}
\subseteq \bigl(BA^{(k)}\bigr)A^{(k)}
\subseteq BA^{(k+1)}.
$$
Induction and solvability of $A$ yield $B=0$,
a contradiction. Hence
$$
BA^2=0.
$$

Finally, $M=BA$ implies
$$
MA=(BA)A\subseteq BA^2=0
$$
and
$$
AM=A(BA)\subseteq (AB)A+(BA)A=0.
$$
Every subspace of $M$ is therefore a two-sided ideal of $A$.
Minimality of $M$ forces $\dim M=1$.
Since $0\neq B\subseteq M$, we conclude that $B=M$.
\end{proof}

\begin{corollary}\label{cor:towers-minimal-ideal-annihilator}
Every minimal nonzero two-sided ideal $M$ of $A$ satisfies
$$
AM=MA=0,
\qquad
\dim M=1.
$$
\end{corollary}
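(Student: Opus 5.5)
The plan is to reduce to Proposition~\ref{prop:towers-minimal-left-ideal} by exhibiting, inside $M$, a minimal nonzero left ideal $B$ with $\phi(B,A,A)=0$. The conclusion then comes from the end of that proof: in both cases $AM=MA=0$ and $\dim M=1$. If $BA=0$, then $B=M$ and $AB\subseteq B$; I will handle that case separately below.

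First, I will split according to whether $M\cap D\neq 0$. By \eqref{eq:defect-space-properties}, $D$ is a left ideal ($AD\subseteq D$) with $\phi(D,A,A)=0$. So $M\cap D$ is a left ideal of $A$ on which all defects vanish. If it is nonzero, it contains a minimal nonzero left ideal $B$, which exists by finite-dimensionality, and automatically $\phi(B,A,A)=0$.

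If $M\cap D=0$, I will use $\phi(M,A,A)\subseteq M\cap D$. This holds because $\phi(m,x,y)=m(xy)-(mx)y-(xm)y\in M$ since $M$ is two-sided, and it also lies in $D$. Hence $\phi(M,A,A)=0$. In that case any minimal nonzero left ideal $B\subseteq M$ satisfies the hypothesis. So in all cases the proposition gives $M=B$.

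Next, I will extract the conclusion $AM=MA=0$ when $M=B$. Reading the proof of the proposition, in the branch $BA=M$ we derived $MA=AM=0$ and $\dim M=1$. In the branch $BA=0$ we get $MA=0$. There we still need $AM=0$. Since $\phi(M,A,A)=0$ and $MA=0$, alternation gives $x(ym)=(xy+yx)m=y(xm)$. So $AM$ is a left ideal contained in $M$. Indeed, since $MA=0$, any subspace $N\subseteq M$ with $AN\subseteq N$ is two-sided.

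Then I will repeat the argument of the proposition. $A^2M$ is a left ideal in $M$; if it equals $M$, the inclusion $M\subseteq A^{(k)}M$ contradicts solvability (Theorem~\ref{thm:finite-dimensional-solvability}). Hence $A^2M=0$. Then $AM=M$ would give $M=A(AM)\subseteq A^2M=0$, so $AM=0$. Once $AM=MA=0$, every subspace of $M$ is an ideal, and minimality forces $\dim M=1$.

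The main care point is ensuring the proposition's proof really yields $AM=0$ in the branch $BA=0$, since the proposition as stated only asserts $M=B$. I handle this by redoing the short solvability argument above rather than citing it.
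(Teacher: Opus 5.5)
Your proposal is correct and follows essentially the paper's proof: you use the same case split on $M\cap D$ to find a minimal left ideal $B\subseteq M$ with $\phi(B,A,A)=0$, apply the proposition to get $M=B$, and then use solvability to kill $AM$ and $MA$. The only difference is in the last step: you read $MA=0$ off the branch $BA=0$ inside the proposition's proof and then get $AM=0$ via $A^2M$, whereas the paper, once $M=B$ and $\phi(M,A,A)=0$ are known, proves $AM=0$ and then $MA=0$ directly by the inductions $A^{(k)}M=M$ and $MA^{(k)}=M$, without using the internals of the proposition's proof (incidentally, the branch $BA=M$ you also cite is vacuous, since it ends with $M=B$ and $MA=0$, forcing $M=BA=0$).
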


\begin{proof}
We first show that $M$ is a minimal nonzero left ideal
and that
$$
\phi(M,A,A)=0.
$$

Suppose first that $M\cap D\neq0$.
Since $M$ is an ideal and $AD\subseteq D$,
the subspace $M\cap D$ is a nonzero left ideal.
Choose a minimal nonzero left ideal $B\subseteq M\cap D$.
The inclusion $B\subseteq D$ gives
$$
\phi(B,A,A)=0.
$$
By Proposition~\ref{prop:towers-minimal-left-ideal},
$M=B$. Thus $M$ is a minimal nonzero left ideal,
and $\phi(M,A,A)=0$.

Now suppose that $M\cap D=0$.
For $m\in M$ and $x,y\in A$, the ideal property gives
$$
\phi(m,x,y)=m(xy)-(mx)y-(xm)y\in M.
$$
By definition, this element also belongs to $D$.
Therefore
$$
\phi(M,A,A)\subseteq M\cap D=0.
$$
Choose a minimal nonzero left ideal $B\subseteq M$.
Proposition~\ref{prop:towers-minimal-left-ideal}
again gives $M=B$.

Thus, in both cases, $M$ is a minimal nonzero left ideal
and all defects with an argument in $M$ vanish.

For $x,y\in A$ and $m\in M$, we have
$$
x(ym)=(xy+yx)m=y(xm).
$$
Hence $AM$ is a left ideal contained in $M$, so
$$
AM=0
\qquad\text{or}\qquad
AM=M.
$$
Suppose that $AM=M$.
If $A^{(k)}M=M$, then
$$
M
=A^{(k)}\bigl(A^{(k)}M\bigr)
\subseteq A^{(k+1)}M
\subseteq M.
$$
Starting with $A^{(0)}M=M$, induction gives
$$
A^{(k)}M=M
\qquad(k\ge0),
$$
contradicting solvability of $A$.
Therefore
$$
AM=0.
$$

For $x,y\in A$ and $m\in M$, we now obtain
$$
x(my)=(xm+mx)y=(mx)y=m(xy).
$$
Thus $MA$ is a left ideal contained in $M$.
By minimality,
$$
MA=0
\qquad\text{or}\qquad
MA=M.
$$
Suppose that $MA=M$.
The identities $AM=0$ and $\phi(M,A,A)=0$ imply
$$
(mu)v=m(uv)
\qquad(m\in M,\ u,v\in A).
$$
If $MA^{(k)}=M$, it follows that
$$
M
=\bigl(MA^{(k)}\bigr)A^{(k)}
\subseteq MA^{(k+1)}
\subseteq M.
$$
Induction again contradicts solvability.
Consequently,
$$
MA=0.
$$

Since $AM=MA=0$, every subspace of $M$ is a two-sided
ideal of $A$. Minimality of $M$ therefore gives
$$
\dim M=1.
$$
\end{proof}

\begin{theorem}\label{thm:binary-zinbiel-nilpotency}
Every finite-dimensional right binary Zinbiel algebra
over a field of characteristic zero is nilpotent.
\end{theorem}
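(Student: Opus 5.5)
We argue by induction on $n=\dim A$, using Corollary~\ref{cor:towers-minimal-ideal-annihilator} as the engine. For $n=0$ there is nothing to prove, and for $n=1$ the computation in the proof of Theorem~\ref{thm:finite-dimensional-solvability} shows $A^2=0$. For $n>1$, choose a minimal nonzero two-sided ideal $M$ of $A$; one exists since $A\neq 0$ is finite-dimensional. By Corollary~\ref{cor:towers-minimal-ideal-annihilator}, $AM=MA=0$ and $\dim M=1$. The quotient $A/M$ is a binary Zinbiel algebra, since the identities \eqref{eq:binary-identities} pass to quotients, and it has dimension $n-1$. By induction, $A/M$ is nilpotent: there is $N$ such that every product of at least $N$ elements of $A/M$, with any parenthesization, vanishes.

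We then lift this to $A$. Take any product $w$ of at least $N+1$ elements of $A$ with arbitrary parentheses. Write $w=uv$, where $u$ and $v$ are subproducts and at least one of them, say $u$, has at least $\lceil (N+1)/2\rceil$ factors. This splitting alone is not quite what we need, so we argue more carefully. Any product of $k\ge N$ factors maps to zero in $A/M$, so it lies in $M$. In $w=uv$ with $u$ and $v$ of lengths $p+q\ge 2N$, one of $p,q$ is at least $N$, so that factor lies in $M$. Since $AM=MA=0$, it follows that $w=0$.

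This shows that every product of at least $2N$ elements of $A$ vanishes, so $A$ is nilpotent. The case where $w$ has fewer than two factors does not arise, because $2N\ge 2$.

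The real work has already been done in Corollary~\ref{cor:towers-minimal-ideal-annihilator}, and the induction itself is routine. The one point to check is that the nilpotency definition, with arbitrary parenthesization, descends to the quotient and lifts correctly through the splitting $w=uv$. This is straightforward: the outermost multiplication of any bracketed word splits it into two bracketed subwords whose lengths sum to the length of $w$.
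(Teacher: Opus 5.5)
Your proof is correct. It shares the paper's skeleton: induction on $\dim A$, a minimal ideal $M$ with $AM=MA=0$ and $\dim M=1$ from Corollary~\ref{cor:towers-minimal-ideal-annihilator}, and the induction hypothesis applied to $A/M$. The difference is in how nilpotency is lifted from $A/M$ to $A$.

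The paper extracts only the left-normed statement $A^r\subseteq M$ and concludes $A^{r+1}\subseteq MA=0$. It then needs Lemma~\ref{lem:spanning} to pass from left-normed products to arbitrary bracketings: a bracketed monomial of degree $N$ becomes a combination of left-normed products of at least $N/3$ elements of $A$, since elements of $Y$ have degree $3$.

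You instead keep the arbitrary-bracketing form of the induction hypothesis and split at the outermost multiplication. So you never use the spanning lemma, nor any binary Zinbiel identity beyond what the corollary already consumed. What you actually prove is the general fact that if $AM=MA=0$ and $A/M$ is nilpotent, then $A$ is nilpotent, which holds in any nonassociative algebra.

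The cost is only quantitative. Your bound on the nilpotency index doubles at each inductive step, so it is of order $2^{\dim A}$. The paper's route gives $A^{\dim A+1}=0$ for left-normed powers and hence, via the spanning lemma, an index linear in $\dim A$. For the qualitative statement of the theorem this does not matter.

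One cosmetic point: delete the false start about products of $N+1$ factors and $\lceil (N+1)/2\rceil$, since your actual argument uses $2N$.
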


\begin{proof}
We argue by induction on $n=\dim A$.
The assertion is immediate if $n=0$.
If $n=1$, the calculation in the proof of
Theorem~\ref{thm:finite-dimensional-solvability} gives $A^2=0$.

Let $n>1$, and assume that the assertion holds for algebras
of smaller dimension.
Choose a minimal nonzero two-sided ideal $M$ of $A$.
By Corollary~\ref{cor:towers-minimal-ideal-annihilator},
$$
AM=MA=0,
\qquad
\dim M=1.
$$
In particular, $M$ is proper.

The quotient $A/M$ is a binary Zinbiel algebra of smaller
dimension. By induction, it is nilpotent.
Using the left-normed powers defined in \eqref{eq:series},
we obtain
$$
A^r\subseteq M
$$
for some positive integer $r$. Consequently,
$$
A^{r+1}=A^rA\subseteq MA=0.
$$

By Lemma~\ref{lem:spanning}, the vanishing of all sufficiently
long left-normed products implies the vanishing of all
sufficiently long products, regardless of the placement
of parentheses. Thus $A$ is nilpotent, completing the induction.
\end{proof}

\section{Nil subvarieties}\label{sec:subvarieties}

In this section, $\mathbb{F}$ is any field of characteristic zero;
no finite-dimensionality assumption is imposed.

By Lemma~\ref{lem:spanning}, for any element $a$ of a binary
Zinbiel algebra, every product of $n$ copies of $a$, regardless
of the placement of parentheses, is a scalar multiple of
the left-normed power $a^{[n]}$. Thus, the use of left-normed
powers in the definition of a nil algebra entails no loss
of generality.

We first recall the integral example from
\cite[Section~1]{DT}, written in the right convention.

\begin{example}\label{ex:integral}
The polynomial algebra
$P=\mathbb{F}[x]$, equipped with the multiplication
$$
(f*g)(x)=g(x)\int_0^x f(t)\,dt,
$$
where integration is understood formally, is a right Zinbiel algebra.

The left-normed powers of the constant polynomial $1$ satisfy
\begin{equation}\label{eq:integral-powers}
1^{[n]}=\frac{x^{n-1}}{(n-1)!}\neq0
\qquad(n\ge1).
\end{equation}
Indeed, the formula holds for $n=1$, and the induction step follows from
$$
1^{[n+1]}
=1^{[n]}*1
=\int_0^x \frac{t^{n-1}}{(n-1)!}\,dt
=\frac{x^n}{n!}.
$$

Therefore the constant polynomial $1$ is not nilpotent,
and $P$ is not nil.
\end{example}

\begin{prp}\label{prop:bounded-nil}
Let $\mathcal W$ be a subvariety of binary Zinbiel algebras
over a field of characteristic zero.
The following conditions are equivalent:
\begin{enumerate}
\item $\Zinb\nsubseteq\mathcal W$.
\item Every algebra in $\mathcal W$ is nil.
\end{enumerate}
\end{prp}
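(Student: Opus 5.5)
The direction $(2)\Rightarrow(1)$ is immediate from Example~\ref{ex:integral}. If $\Zinb\subseteq\mathcal W$, then the integral algebra $P=\mathbb F[x]$ lies in $\mathcal W$, since it is right Zinbiel. By \eqref{eq:integral-powers} it is not nil. So condition (2) forces $\Zinb\nsubseteq\mathcal W$.

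For $(1)\Rightarrow(2)$, assume $\Zinb\nsubseteq\mathcal W$. Then $\mathcal W\cap\Zinb$ is a proper subvariety of $\Zinb$. By the theorem of Naurazbekova and Umirbaev~\cite{NU}, every proper subvariety of $\Zinb$ in characteristic zero is nilpotent. Hence there is $N$ such that every algebra in $\mathcal W\cap\Zinb$ satisfies all identities of degree $N$ saying that products of $N$ elements vanish. In particular it satisfies $x^{[N]}=0$ (after the convention change between left and right Zinbiel, which only passes to the opposite algebra and preserves nilpotency).

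Next, take $A\in\mathcal W$ and $a\in A$, and let $B$ be the subalgebra generated by $a$. Since $A$ is binary Zinbiel, $B$ is a right Zinbiel algebra. It also lies in $\mathcal W$, because varieties are closed under subalgebras. Thus $B\in\mathcal W\cap\Zinb$, and so $a^{[N]}=0$ in $B$, hence in $A$. This gives nil, indeed with a uniformly bounded index $N$ for the whole variety.

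The main point to verify carefully is the use of \cite{NU}: that their result gives nilpotency, with a uniform bound, of the whole subvariety $\mathcal W\cap\Zinb$, and that it transfers from the left to the right convention via $A\mapsto A^{\mathrm{op}}$. If one preferred to avoid the full strength of \cite{NU}, a fallback is the following. First note that the one-generated free Zinbiel algebra is isomorphic to the span of $x^{n}$ in Example~\ref{ex:integral}, with $1^{[n]}$ a basis in degree $n$. Any nontrivial identity of $\mathcal W$ not holding in $\Zinb$ then needs to be specialized to one variable to produce $\lambda x^{[N]}=0$ with $\lambda\neq0$. That specialization is not automatic, which is why invoking \cite{NU} is the intended route.
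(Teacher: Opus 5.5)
Your proposal is correct and follows the paper's proof: the integral algebra $P$ of Example~\ref{ex:integral} gives $(2)\Rightarrow(1)$. For $(1)\Rightarrow(2)$, you apply the Naurazbekova--Umirbaev theorem to the proper subvariety $\mathcal W\cap\Zinb$ and note that each one-generated subalgebra $\langle a\rangle$ of $A\in\mathcal W$ lies in $\mathcal W\cap\Zinb$, which gives the uniform bound $a^{[N]}=0$.
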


\begin{proof}
Assume first that $\Zinb\nsubseteq\mathcal W$. We must show that
every algebra $A\in\mathcal W$ is nil. Consider the intersection
$$
\mathcal U=\mathcal W\cap\Zinb.
$$
The assumption $\Zinb\nsubseteq\mathcal W$
implies that $\mathcal U$ is a proper subvariety of
$\Zinb$. By the theorem of Naurazbekova and
Umirbaev~\cite[Theorem~2]{NU}, $\mathcal U$ is nilpotent. Hence there exists
an integer $N$ such that every product of $N$ elements
in every algebra belonging to $\mathcal U$, with arbitrary
parentheses, is zero.

Let $A\in\mathcal W$ and $a\in A$. Since $A$ is binary
Zinbiel, its one-generated subalgebra
$$
B=\langle a\rangle
$$
is a Zinbiel algebra. Moreover, varieties are closed under
taking subalgebras, so $B\in\mathcal W$. Therefore,
$$
B\in\mathcal W\cap\Zinb=\mathcal U.
$$

By the choice of $N$, every product of $N$ copies of $a$,
with arbitrary parentheses, vanishes. In particular,
$$
a^{[N]}=0.
$$
Thus every element of $A$ is nilpotent, so $A$ is nil. Since $A$ was an arbitrary member of $\mathcal W$, the variety $\mathcal W$ is nil.

Conversely, suppose that $\mathcal W$ is nil.
By Example~\ref{ex:integral}, the integral Zinbiel algebra $P$
is not nil, since \eqref{eq:integral-powers} gives
$1^{[n]}\neq0$ for every $n\ge1$.
If $\Zinb\subseteq\mathcal W$, then $P\in\mathcal W$,
a contradiction. Therefore $\Zinb\nsubseteq\mathcal W$.
\end{proof}

\section{It\^o's theorem}\label{sec:ito}

An algebra $C$ is called \emph{abelian} if $C^2=0$. An algebra $Z$
is called \emph{metabelian} if
$$
(Z^2)^2=0,
$$
or, equivalently, if $(xy)(zt)=0$ for all $x,y,z,t\in Z$.
In terms of the derived series~\eqref{eq:series}, this means
$Z^{(2)}=0$. No finite-dimensionality assumption is imposed in
this section.

We use the Zinbiel defect $\phi$ defined in~\eqref{eq:defect}
and its alternating property~\eqref{iden-1}.

\begin{theorem}[It\^o's theorem for binary Zinbiel algebras]
\label{thm:ito-binary-zinbiel}
Let $Z$ be a right binary Zinbiel algebra over $\mathbb F$.
Suppose that
$$
Z=A+B,
\qquad
A^2=B^2=0,
$$
where $A$ and $B$ are subalgebras of $Z$. Then $Z$ is metabelian:
$
(Z^2)^2=0.
$
\end{theorem}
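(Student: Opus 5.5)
The plan is to reduce everything to a finite list of products of generators from $A$ and $B$, and to kill each of them with the alternation \eqref{iden-1} together with the identities of Lemma~\ref{lem:defect-identities}.

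First, since $A^2=B^2=0$ and $Z=A+B$, we have
$$
Z^2=AB+BA .
$$
So it suffices to prove $(xy)(zt)=0$ whenever each of the pairs $(x,y)$ and $(z,t)$ is one of $(a,b)$ or $(b,a)$, with $a,a'\in A$ and $b,b'\in B$. That gives four types of products, and the symmetry $A\leftrightarrow B$ reduces these to two or three genuinely different cases.

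Next I would record the basic consequences of $A^2=B^2=0$ for the defect. For $a,a'\in A$ and any $z\in Z$,
$$
\phi(a,a',z)=a(a'z)-(a'z)\cdot 0\ldots
$$
more precisely $\phi(a,a',z)=a(a'z)-(aa')z-(a'a)z=a(a'z)$. Hence $a(a'z)$ is alternating in $a,a'$, and in particular $a(az)=0$. Moving the $A$-entries into other positions with \eqref{iden-1} gives further relations, for example
$$
\phi(a,b,a')=a(ba')-(ab)a'-(ba)a'=-a'(ab').
$$
The analogous statements hold with $A$ and $B$ interchanged. I would also use \eqref{iden-2} and \eqref{eq:phi-third-product} in the forms
$$
\phi(ab,c,d)=a\phi(b,c,d),\qquad \phi(c,d,ab)=a\phi(c,d,b).
$$

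Then I would expand a typical product. For instance,
$$
(ab)(a'b')=\bigl((ab)a'\bigr)b'+\bigl(a'(ab)\bigr)b'+\phi(ab,a',b').
$$
The last term equals $a\phi(b,a',b')=a\bigl(b(b'a')\bigr)$ up to sign, using the $B$-version of the relation above. The middle term $a'(ab)$ is itself a defect $\phi(a',a,b)$, which is alternating in $a,a'$. The first term can be rewritten through $\phi(a,b,a')$. After this, every product of four elements with two from $A$ and two from $B$ is a linear combination of left-normed or right-normed words of the forms $a(b(a'b'))$ and $((ab)a')b'$. The defect relations give linear equations among these words. I expect the system to show that each word is simultaneously symmetric and antisymmetric in the pair of $A$-entries (or in the pair of $B$-entries). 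In characteristic zero that forces each word to vanish, and hence $(xy)(zt)=0$.

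The main obstacle is this last bookkeeping step: closing the linear system on the degree-four multilinear words in $a,a',b,b'$. My first attempt would be to list all $S_2\times S_2$ variants of the words $a(b(a'b'))$, $a(a'(bb'))$ and $((ab)a')b'$. I would then use $\phi(ab,c,d)=a\phi(b,c,d)$ together with \eqref{eq:commuting-on-defects}, which says that left multiplications commute on $D$, to write every product as $x(y\,\phi(\cdot,\cdot,\cdot))$ or as a defect. Finally, I would check that the alternation of $\phi$ in its two $A$-arguments, against the commutation $a(a'u)=a'(au)$ for $u\in D$, yields $2\,a(a'u)=0$.
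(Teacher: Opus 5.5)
\paragraph{A genuine gap: the plan never uses $Z=A+B$ after the first step.}
Your reduction to $Z^2=AB+BA$ is correct. From then on, however, you use only the binary Zinbiel identities, Lemma~\ref{lem:defect-identities}, and the relations $xy=0$ for $x,y$ both in $A$ or both in $B$. These ingredients cannot prove the theorem, as the following example shows.

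Let $W$ be the algebra of nonempty words in the letters $a,b$ with product $u\cdot(vy)=(u\sqcup v)y$. Here $y$ is a letter, $v$ is a possibly empty word, and $\sqcup$ is the shuffle. The right Zinbiel law follows from $u\cdot w+w\cdot u=u\sqcup w$ and associativity of $\sqcup$. Let $Q$ be the quotient of $W$ by the ideal generated by $aa$ and $bb$. Then $Q$ is Zinbiel, hence binary Zinbiel, and $\mathbb{F}a$, $\mathbb{F}b$ are abelian subalgebras.

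Now list the six ways of multiplying $aa$ by $b,b$ and the six ways of multiplying $bb$ by $a,a$. One finds that the multidegree-$(2,2)$ part of the ideal is spanned by $aabb$, $bbaa$, $abab+baab$ and $abba+baba$. On the other hand, $(ab)(ab)=(ab\sqcup a)b=2aabb+abab$. Hence $(ab)(ab)\neq0$ in $Q$, even though every hypothesis you use holds there for $A=\mathbb{F}a$ and $B=\mathbb{F}b$.

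So your linear system on words such as $a(b(a'b'))$ and $((ab)a')b'$ cannot close. In particular, the left-normed word $((ab)a')b'$ cannot be rewritten through defects, since all defects vanish in $Q$ while that word survives. A minor slip as well: $\phi(a,b,a')=\phi(a',a,b)=a'(ab)$, not $-a'(ab')$.

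\paragraph{The missing idea: decompose intermediate products.}
You need to apply $Z=A+B$ to intermediate products, not only to the factors. The paper proceeds as follows.
\begin{enumerate}
\item For $u,v$ in the same summand and any $x\in Z$, it derives $u(vx)=-v(ux)$, $u(xv)=-2u(vx)$ and $(ux+xu)v=v(ux)$.
\item It splits an arbitrary $z$ as $z_A+z_B$. For $p,q$ in different summands this gives $p(qz)=p(qz_A)=-2p(z_Aq)=-2p(zq)$.
\item Put $r=c((ab)d)$ and $s=c((ba)d)$ with $a,c\in A$ and $b,d\in B$. The relations above give both $r=-2s$ and $r=-3s$, so $r=s=0$.
\item Finally, it writes $ab$ (or $ba$) as $p+q$ with $p\in A$, $q\in B$. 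From this it gets $(ab)(cd)=(ab)(dc)=0$, and likewise for $ba$.
\end{enumerate}
The relation in step 2 does not follow from the identities and $A^2=B^2=0$ alone. In $Q$, take $p=a$, $q=b$, $z=ab$: then $p(qz)=0$, while $p(zq)=2aabb+abab\neq0$. Some decomposition step of this kind has to enter your argument.
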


\begin{proof}
Let $C$ be either $A$ or $B$. We first show that, for
$u,v\in C$ and $x\in Z$,
\begin{equation}\label{eq:ito-same-summand}
\begin{aligned}
u(vx)&=-v(ux),\\
u(xv)&=-2u(vx),\\
(ux+xu)v&=v(ux).
\end{aligned}
\end{equation}
Since $uv=vu=0$, the definition of the defect gives
$$
\phi(u,v,x)=u(vx),
\qquad
\phi(v,u,x)=v(ux).
$$
The first equality in~\eqref{eq:ito-same-summand} follows from
alternation. Invariance under cyclic permutations also gives
$$
-(xu+ux)v
=\phi(x,u,v)
=\phi(u,v,x)
=u(vx).
$$
Together with the first equality, this proves the third equality.
Finally,
$$
u(xv)-(ux+xu)v
=\phi(u,x,v)
=-\phi(u,v,x)
=-u(vx).
$$
Substituting $(ux+xu)v=-u(vx)$ yields the second equality.

Let $p\in A$, $q\in B$, and $z\in Z$. Choose a decomposition
$z=z_A+z_B$, where $z_A\in A$ and $z_B\in B$. Since $B^2=0$,
the second equality in~\eqref{eq:ito-same-summand} gives
$$
p(qz)=p(qz_A)=-2p(z_Aq)=-2p(zq).
$$
Thus
\begin{equation}\label{eq:ito-different-summands}
p(qz)=-2p(zq)
\end{equation}
whenever $p$ and $q$ belong to different summands. The same
argument applies with $A$ and $B$ interchanged.

Fix $a,c\in A$ and $b,d\in B$, and put
$$
r=c((ab)d),
\qquad
s=c((ba)d).
$$
Applying~\eqref{eq:ito-different-summands} three times, we obtain
$$
-2r=c(d(ab))=-2c(d(ba))=4s.
$$
Consequently,
\begin{equation}\label{eq:ito-r-two-s}
r=-2s.
\end{equation}
On the other hand, the third equality
in~\eqref{eq:ito-same-summand}, applied to $b,d\in B$, gives
$$
(ba+ab)d=d(ba).
$$
Multiplying on the left by $c$ and
using~\eqref{eq:ito-different-summands}, we obtain
$$
r+s=c(d(ba))=-2s,
$$
so
\begin{equation}\label{eq:ito-r-three-s}
r=-3s.
\end{equation}
Comparing~\eqref{eq:ito-r-two-s} and~\eqref{eq:ito-r-three-s}
gives $s=0$, and then $r=0$. Hence
\begin{equation}\label{eq:ito-fourfold-AB}
c((ab)d)=c((ba)d)=0
\end{equation}
for all $a,c\in A$ and $b,d\in B$. Interchanging $A$ and $B$
also gives
\begin{equation}\label{eq:ito-fourfold-BA}
d((ab)c)=d((ba)c)=0.
\end{equation}

Let $u=ab$ or $u=ba$, and choose a decomposition
$$
u=p+q,
\qquad
p\in A,\quad q\in B.
$$
For $c\in A$ and $d\in B$, identities
\eqref{eq:ito-same-summand}, \eqref{eq:ito-fourfold-AB},
and~\eqref{eq:ito-fourfold-BA} imply
$$
\begin{aligned}
p(cd)&=-c(pd)=-c(ud)=0,\\
q(cd)&=-2q(dc)=2d(qc)=2d(uc)=0.
\end{aligned}
$$
Here $pd=ud$ because $qd=0$, and $qc=uc$ because $pc=0$.
Therefore
$$
u(cd)=p(cd)+q(cd)=0.
$$
Moreover, the second equality in~\eqref{eq:ito-same-summand}
gives $p(dc)=-2p(cd)=0$, while its first equality gives
$$
q(dc)=-d(qc)=-d(uc)=0.
$$
Thus $u(dc)=0$ as well.

Finally, $A^2=B^2=0$ implies
$$
Z^2=(A+B)(A+B)=AB+BA.
$$
Every product of two spanning elements of $AB+BA$ is zero by
the preceding calculation. By bilinearity, $
(Z^2)^2=0$.
\end{proof}

\begin{remark}\label{rem:ito-counterexamples}
The analogue of It\^o's theorem does not hold for
mono Zinbiel algebras. Let $Z$ be the three-dimensional
algebra over $\mathbb F$ with basis $e_1,e_2,e_3$ and nonzero
products
$$
e_1e_3=e_3,\qquad e_2e_3=e_1,\qquad
e_3e_1=-e_3,\qquad e_3e_2=-e_1.
$$
All other products of basis elements are zero.

The algebra $Z$ is anticommutative, so $x^2=0$ for every $x\in Z$.
Consequently, every one-generated subalgebra
$\langle x\rangle=\mathbb F x$ has zero multiplication and is
therefore a Zinbiel algebra. Thus $Z$ is mono Zinbiel.

Set
$
A=\operatorname{span}_{\mathbb F}\{e_1,e_2\}$
and $B=\mathbb F e_3$. 
Then $Z=A\oplus B$ and $A^2=B^2=0$.
Nevertheless,
$$
(e_2e_3)(e_1e_3)=e_1e_3=e_3\neq0,
$$
so $Z$ is not metabelian. More explicitly,
$
Z^2=\operatorname{span}_{\mathbb F}\{e_1,e_3\}$
and $(Z^2)^2=\mathbb F e_3\neq0$.
Hence the binary assumption in
Theorem~\ref{thm:ito-binary-zinbiel} cannot be replaced by
the mono assumption.
\end{remark}
\section*{Acknowledgments}

The author is supported by grant AP26199089 from the Ministry of Science and Higher Education of the Republic of Kazakhstan.  The author thanks Nurlan Ismailov and Farukh Mashurov for their interest in this work and for valuable discussions.

\end{document}